\documentclass[12pt]{article}
\usepackage{amsmath,amsfonts,amssymb,amsthm}
\usepackage{xcolor}
\usepackage{enumitem}
\usepackage[colorlinks=true,linkcolor=black,citecolor=black,urlcolor=black,bookmarks,breaklinks=true]{hyperref}
\setitemize{itemsep=0.1pt}
\setenumerate{itemsep=0.1pt}

\usepackage{mathtools}

\oddsidemargin 0cm \evensidemargin 0cm
\topmargin0cm
\headheight0cm
\headsep0cm
\textheight23.5cm
\topskip2ex
\textwidth16cm

\newtheorem{proposition}{Proposition}[section]
\newtheorem{theorem}[proposition]{Theorem}

\newtheorem{remark}[proposition]{Remark}

\newtheorem{algorithm}[proposition]{Algorithm}
\newcommand{\nc}{\newcommand}
\nc{\I}{{\mathbf 1}}

\nc{\bN}{{\mathbf N}}
\nc{\bM}{{\mathbf M}}
\nc{\cB}{{\mathcal B}}
\nc{\cL}{{\mathcal L}}
\nc{\R}{{\mathbb R}}
\nc{\N}{{\mathbb N}}
\nc{\Z}{{\mathbb Z}}
\nc{\bF}{{\mathbf F}}

\DeclareMathOperator{\aff}{aff}
\nc{\BP}{\mathbb{P}}
\nc{\BE}{\mathbb{E}}
\nc{\BQ}{\mathbb{Q}}

\nc{\BH}{{\mathbb H}}
\nc{\BS}{{\mathbb S}}

\DeclareMathOperator{\BV}{{\mathbb Var}}
\DeclareMathOperator{\BC}{{\mathbb Cov}}

\numberwithin{equation}{section}

\begin{document} 

\renewcommand{\thefootnote}{\fnsymbol{footnote}}
\author{M.A. Klatt\footnotemark[1]\, and G. Last\footnotemark[2]}
\footnotetext[1]{mklatt@princeton.edu, Department of Physics, Princeton University, Princeton, NJ 08544, USA }
\footnotetext[2]{guenter.last@kit.edu, Karlsruhe Institute of
  Technology, Institute for Stochastics, 76131 Karlsruhe, Germany. }

\title{On strongly rigid hyperfluctuating random measures} 
\date{\today}
\maketitle

\begin{abstract} 
\noindent 
In contrast to previous belief, we provide examples of stationary ergodic
random measures that are both hyperfluctuating and strongly rigid.
Therefore, we study hyperplane intersection processes (HIPs) that are
formed by the vertices of Poisson hyperplane tessellations.
These HIPs are known to be hyperfluctuating, that
is, the variance of the number of points in a bounded observation window
grows faster than the size of the window.
Here we show that the HIPs exhibit a particularly strong rigidity
property.
For any bounded Borel set $B$, an exponentially small (bounded) stopping
set suffices to reconstruct the position of all points in $B$ and, in
fact,
all hyperplanes intersecting $B$.
Therefore, also the random measures supported by the hyperplane
intersections of arbitrary (but fixed) dimension, are hyperfluctuating.
Our examples aid the search for relations between correlations, density
fluctuations, and rigidity properties.
\end{abstract}

\noindent
{\bf Keywords:} Strong rigidity, hyperfluctuation, hyperuniformity,
Poisson hyperplane\\ tessellations, hyperplane intersection processes

\vspace{0.1cm}
\noindent
{\bf AMS MSC 2010:} 60G55, 60G57, 60D05

\section{Introduction}\label{sintro}

Let $\Phi$ be a {\em random measure} on the $d$-dimensional
Euclidean space $\R^d$; see \cite{Kallenberg,LastPenrose17}.
In this note all random objects are defined over a fixed
probability space $(\Omega,\mathcal{F},\BP)$ with associated
expectation operator $\BE$.
Assume that $\Phi$ is {\em stationary}, that
is distributionally invariant under translations.
Assume also that $\Phi$ is {\em locally square integrable},
that is $\BE[\Phi(B)^2]<\infty$ for all compact $B\subset\R^d$.
Take a {\em convex body} $W$, that is a compact and 
convex subset of $\R^d$ and assume that $W$ has positive volume
$V_d(W)$. In many cases of interest one can define
an {\em asymptotic variance} by the limit
\begin{align}
\sigma^2:=\lim_{r\to\infty} \frac{\BV[\Phi(rW)]}{V_d(rW)},
\end{align}
where the cases $\sigma^2=0$ and $\sigma^2=\infty$ 
are allowed. This limit may depend on $W$; but we do
not include this dependence into our notation.  
Quite often the asymptotic variance $\sigma^2$ is
positive and finite. If, however, $\sigma^2=0$, then
$\Phi$ is said to be {\em hyperuniform}~\cite{TS03,T18}.
If $\sigma^2=\infty$, then $\Phi$ is said to be {\em hyperfluctuating}~\cite{T18}.
In recent years hyperuniform random measures (in particular point
processes) have attracted a great deal of attention.
The local behavior of such processes can very much resemble that of a
weakly correlated point process.
Only on a global scale a regular geometric pattern might become visible.
Large-scale density fluctuations remain anomalously suppressed similar
to a lattice; see \cite{TS03,T18,GL17a}.
The concept of hyperuniformity connects a broad range of areas of
research (in physics)~\cite{T18}, including
unique effective properties of heterogeneous materials,
Coulomb systems,
avian photoreceptor cells, 
self-organization, and isotropic photonic band gaps.

A point process $\Phi$ on $\R^d$ is said to be {\em number rigid}
if the number of points inside a given compact set is almost surely 
determined by the configuration of points outside \cite{PS14,B16}.
Examples of number rigid point processes include lattices independently 
perturbed by bounded random variables, Gibbs processes with certain 
long-range interactions \cite{DHLM18}, zeros of Gaussian entire 
functions \cite{GP17}, stable matchings from \cite{KLY20},  and some 
determinantal processes with a projection kernel \cite{GK15}.

It was proved in \cite{GL17b} that in one and two dimensions
a hyperuniform point process is number rigid, provided
that the truncated pair-correlation function is decaying sufficiently
fast.
Quite remarkably, it was shown in \cite{PS14} that in three and higher dimensions
a Gaussian independent perturbation of a lattice (which is hyperuniform)
is number rigid below a critical value of the variance
but not number rigid above.
It is believed \cite{GL17a} that a stationary number rigid point process
is hyperuniform.
In this note we show that this is not true.
In fact we give examples of stationary and ergodic (in 
fact mixing) random measures that are both hyperfluctuating and
rigid in a very strong sense.
The authors are not aware of any previously known rigid and ergodic 
process that is non-hyperuniform in dimensions $d\geq 2$ (if $W$ is the 
unit ball).
An example for $d=1$ has very recently been given in \cite{RLR20}.
In this paper we will prove that the point process 
resulting from intersecting Poisson hyperplanes 
has very strong rigidity properties.
This point process is hyperfluctuating \cite{HeinSS06} and, under an 
additional assumption on the directional distribution, mixing; see 
\cite[Theorem 10.5.3]{SW08} and Remark \ref{rmixing}.

\section{Poisson hyperplane processes}\label{sPoissonhyper}
 
In this section we collect a few basic properties of
Poisson hyperplane processes and the associated intersection processes.
Let $\BH^{d-1}$ denote the space of all
hyperplanes in $\R^d$. Any such hyperplane $H$ is of the form
\begin{align}\label{e12.31}
H_{u,s}:=\{y\in\R^d:\langle y,u\rangle =s\},
\end{align}
where $u$ is an element of the unit sphere $\BS^{d-1}$, $s\in\R$
and $\langle \cdot,\cdot\rangle$ denotes the Euclidean scalar product.
(Any hyperplane has two representations of this type.)
We can make $\BH^{d-1}$ a measurable space by introducing the
smallest $\sigma$-field containing  for each compact $K\subset \R^d$
the set
\begin{align}\label{ehit}
[K]:=\{H\in \BH^{d-1}:H\cap K\ne\emptyset\}.
\end{align}
In fact, $\BH^{d-1}\cup\{\emptyset\}$ can be shown to be a closed subset of the space of all
closed subsets of $\R^d$, equipped with the Fell topology. 
We refer to \cite{SW08} for more details on this topology
and related measurability issues; see also \cite[Appendix A3]{LastPenrose17}.

We consider a (stationary) {\em Poisson hyperplane process}, that is
a {\em Poisson process} $\eta$ on
$\BH^{d-1}$ whose intensity measure is given by
\begin{align}\label{elambda}
\lambda=\gamma\int_{\BS^{d-1}} \int_\R
\I\{H_{u,s}\in\cdot\}\,ds\,\BQ(du),
\end{align}
where $\gamma>0$ is an {\em intensity} parameter and $\BQ$ (the {\em directional distribution} of $\eta$)
is an even probability measure on $\BS^{d-1}$.
We assume that $\BQ$ is not concentrated on a great subsphere.
It would be helpful (even though not strictly necessary)
if the reader is familiar with basic point process and random measure
terminology; see e.g.\ \cite{LastPenrose17}.
For our purposes it is mostly enough to interpret $\eta$ 
as a random discrete subset of $\BH^{d-1}$. The number of points
(hyperplanes) in a measurable set $A\subset\BH^{d-1}$ is then given
by $|\eta\cap A|$ and has a Poisson distribution with parameter $\lambda(A)$.
Since $\lambda$ is invariant under translations
(we have for all $x\in\R^d$ that $\lambda(\cdot)=\lambda(\{H:H+x\in\cdot\})$),
the Poisson process $\eta$ is {\em stationary}, that is distributionally
invariant under translations. Furthermore we can derive from {\em Campbell's theorem}
(see e.g.\ \cite[Proposition 2.7]{LastPenrose17}) and \eqref{elambda} that 
\begin{align}\label{elf}
\BE[|\eta\cap [K]|]<\infty,\quad \text{$K\subset\R^d$ compact}.
\end{align}
As usual we assume (without loss of generality) that $|\eta(\omega)\cap [K]|<\infty$ 
for all $\omega\in\Omega$ and all compact $K\subset\R^d$.
More details on Poisson hyperplane processes
can be found in \cite[Section 4.4]{SW08}.

Let $m\in\{1,\ldots,d\}$. 
We define a random measure $\Phi_m$ on $\R^d$ by
\begin{align}\label{ePhim}
\Phi_m(B):=\frac{1}{m!}\;\sideset{}{^{\ne}}\sum_{H_1,\ldots,H_m \in \eta} 
\mathcal{H}^{d-m}(B\cap H_1\cap\cdots \cap H_m)
\end{align}
for Borel sets $B\subset\R^d$, 
where $\sum^{\neq}$ denotes summation over pairwise
distinct entries and where $\mathcal{H}^{d-m}$
is the Hausdorff measure of dimension $d-m$; see e.g.\ 
\cite[Appendix A.3]{LastPenrose17}. 
Using the arguments on p.\ 130 of \cite{SW08}
one can show that almost surely for all distinct $H_1,\ldots,H_m\in\eta$ the
intersection $H_1\cap\cdots\cap H_m$ is either empty or has dimension $d-m$.
Combining this with \eqref{elf}, we see that
the random measures $\Phi_1,\ldots,\Phi_m$ 
are almost surely {\em locally finite}, that is finite
on bounded Borel sets. 
The random variable $\Phi_m(B)$ is the
volume contents (in the appropriate dimension)
of all possible intersections of $d-m$ hyperplanes within $B$.

\begin{figure}[t]
  \centering
  \mbox{}\hfill%
  \includegraphics[width=0.38\textwidth]{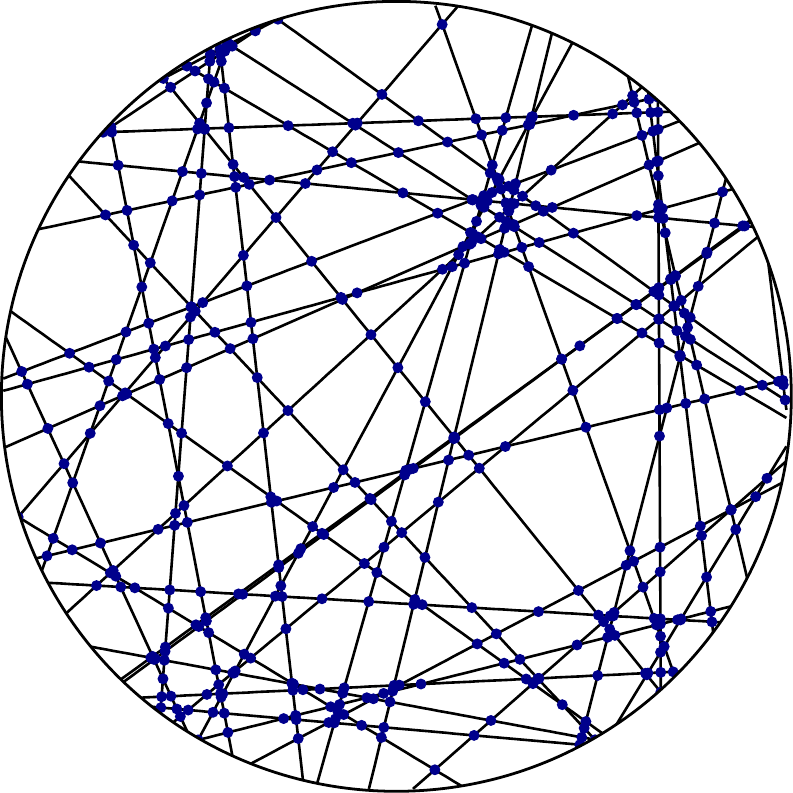}\hfill%
  \includegraphics[width=0.38\textwidth]{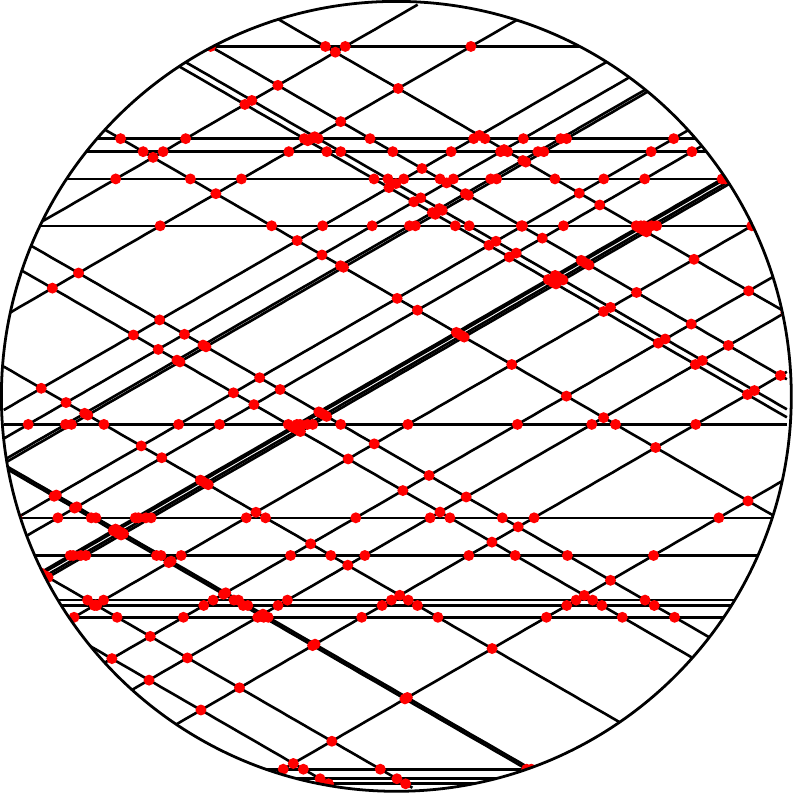}\hfill%
  \mbox{}\hfill%
  \caption{Samples of Poisson hyperplane processes $\eta$ (lines) and the 
  corresponding intersection processes $\Phi$ (solid circles) for two 
  directional distributions: isotropic (left) and only three directions 
  (right).}
  \label{fig:samples}
\end{figure}

It can be shown 
that (almost surely) the intersection of $d+1$ different hyperplanes from $\eta$
is empty. Therefore the random measure $\Phi_d$ is
almost surely a point process without multiplicities, so 
that $\Phi_d(B)$ is just the number of (intersection)
points $x\in B$ with $\{x\}=H_1\cap\cdots\cap H_d$ for
some $H_1,\ldots,H_d\in\eta$. It is convenient to define
a simple (and locally finite) point process
$\Phi$ as the set of all points   
$x\in\R^d$ with $\{x\}=H_1\cap\cdots\cap H_d$ 
for some $H_1,\ldots,H_d\in\eta$.
When (as it is common) interpreting $\Phi$ as a random
counting measure, we have that $\BP(\Phi=\Phi_d)=1$.
Figure~\ref{fig:samples} shows two samples of $\eta$ and $\Phi$.

Among other things, Theorem 4.4.8 in \cite{SW08} gives a formula
for the {\em intensity} $\gamma_m:=\BE[\Phi_m([0,1]^d)]$ of $\Phi_m$.
We only need to know that it is positive and finite.
In the remaining part of this section we recall
some second order properties of $\Phi_m$. (At first reading some details could
be skipped without too much loss.)
Let $A,B$ be bounded Borel subsets of $\R^d$.
Using the theory of U-statistics \cite[Section 12.3]{LastPenrose17}
it was shown in \cite{LPST14} that
\begin{align}\label{ecovariance}
\lim_{r\to\infty}r^{-(2d-1)}\BC[\Phi_m(rA),\Phi_m(rB)]=C_m(A,B),
\end{align}
where 
\begin{align}\label{cov}\notag
C_m(A,B)&:=\frac{1}{((m-1)!)^2}
\int\bigg(\int \mathcal{H}^{d-m}(A\cap H_1\cap \dots \cap H_m)\,
\lambda^{m-1}\big(d(H_2,\dots,H_m)\big)\bigg)\\
&\times\bigg(\int \mathcal{H}^{d-m}(B\cap H_1\cap H'_2\cap \dots \cap H'_m)\,
\lambda^{m-1}\big(d(H'_2,\dots, H'_m)\big)\bigg)\,\lambda(d H_1).
\end{align}
If $m=1$, this has to be read as
\begin{align*}
C_1(A,B)=
\int \mathcal{H}^{d-1}(A\cap H_1)\mathcal{H}^{d-1}(B\cap H_1)\,\lambda(d H_1).
\end{align*}
The asymptotic variance $C_m(A,A)$ was derived in \cite{HeinSS06}.
We note that $C_m(A,A)$ is finite (this is implied by the
form \eqref{elambda} of $\lambda$)
and that $C_m(A,A)=0$ iff  
$$
\int\mathcal{H}^{d-m}(A\cap H_1\cap \cdots \cap H_m)
\,\lambda^{m}(d(H_1,\dots,H_m))=0.
$$
Since $\BQ$ is not concentrated on a great subsphere, this
happens if and only if the Lebesgue measure of $A$ vanishes;
see the proof of \cite[Theorem 4.4.8]{SW08}.
Therefore we obtain from \eqref{ecovariance}
that the random measures $\Phi_1,\ldots,\Phi_d$ are hyperfluctuating
(if $d\ge 2$).
The results in \cite{HeinSS06,LPST14} show
that, for each finite collection $B_1,\ldots,B_n$ of bounded Borel sets,
the random vector 
$r^{-(d-1/2)}(\Phi_m(rB_1)-\BE[\Phi_m(rB_1)],\ldots,\Phi_m(rB_n)-\BE[\Phi_m(rB_n)])$
converges in distribution to a multivariate normal distribution.

It is worth noting that the asymptotic covariances \eqref{cov}
are non-negative. If $\eta$ is {\em isotropic} (meaning that
$\BQ$ is the uniform distribution on $\BS^{d-1}$),
there exist more detailed non-asymptotic second order results.
In this case \cite[p.\ 936]{HeinSS06} shows
the {\em pair correlation function} $\rho_2$ 
(see e.g.\ \cite[Section 8.2]{LastPenrose17})
of the intersection point process $\Phi=\Phi_d$ is given by
\begin{align}\label{epairc}
\rho_2(x)=1+\sum^d_{i=1}a_i \gamma^{-i}\|x\|^{-i},\quad x\in\R^d,\,x\ne 0,
\end{align}
where the coefficients $a_1,\ldots,a_d$ are strictly poisitive and do only 
depend on the dimension. Hence, as $\|x\|\to\infty$, $\rho_2(x)-1\to 0$
only at speed $\|x\|^{-1}$. In particular, the
{\em truncated} pair correlation function $\rho_2-1$ is not integrable
outside of any neighborhood of the origin.  
Using the well-known formula \cite[Exercise 8.9]{LastPenrose17}
\begin{align*}
\BV[\Phi(B)]=\gamma_dV_d(B)+\gamma_d^2\int V_d(B\cap (B+x))(\rho_2(x)-1)\,dx,
\end{align*}
(valid for all bounded Borel sets $B\subset\R^d$) and assuming that $B$
is convex, it is not too hard to confirm \eqref{ecovariance}
(using polar coordinates)  for $A=B$
and a certain positive constant $C_d(B,B)$. The value of 
this constant can be found in \cite{HeinSS06}.

\begin{remark}\label{rmixing}\rm Assume that $\BQ$ vanishes
on any great subsphere. Then
the random measures $\Phi_1,\ldots,\Phi_d$ have the
following {\em mixing} property. Let $i\in\{1,\ldots,d\}$. Then $\Phi_i$ can
be interpreted as a random element in a suitably space $\mathbf{M}$ of 
measures on $\R^d$ equipped with a suitable $\sigma$-field 
\cite{Kallenberg,LastPenrose17}.
Let $A,B$ be arbitrary measurable subsets of $\mathbf{M}$. Then
\begin{align*}
\lim_{\|x\|\to\infty}\BP(\Phi_i\in A,\theta_x\Phi_i\in B)=
\BP(\Phi_i\in A)\BP(\Phi_i\in B),
\end{align*}
where the random measure $\theta_x\Phi_i$ is defined by
$\theta_x\Phi_i(C):=\Phi_i(C+x)$ for Borel sets $C\subset\R^d$.
This is a straightforward consequence of \cite[Theorem 10.5.3]{SW08}
and the fact that $\Phi_i$ is derived from $\eta$ in a translation
invariant way. In particular $\Phi_i$ is {\em ergodic}, that
is $\BP(\Phi_i\in A)\in\{0,1\}$ for each translation invariant 
measurable set $A\subset\mathbf{M}$.
\end{remark}

\section{A reconstruction algorithm}\label{sreconstruction}

Let $\eta$ be a Poisson hyperplane process
as in Section \ref{sPoissonhyper}. 
Let $\Phi$ be the intersection point process
associated with $\eta$. (Recall from Section \ref{sPoissonhyper}  
that $\BP(\Phi=\Phi_d)=1$, where $\Phi_d$ is given by \eqref{ePhim}
for $m=d$.) Let $K\subset\R^d$ be
a non-empty convex and compact set. 
In this section we describe an algorithm which reconstructs 
$\eta\cap [K]$ (see Algorithm~\ref{algr} and Fig.~\ref{fig:algr})
by observing the points of $\Phi$ in a (random) bounded domain in the 
complement of $K$.
In the next section we shall show that this domain is exponentially 
small.

We say that $n\ge d$ points from $\R^d$
are in {\em general hyperplane position} if any $d$ of them
are affinely independent and span the same hyperplane.
The straightforward idea of the algorithm comes from
the following proposition of some independent interest.
We have not been able to find this result in the literature.

\begin{proposition}\label{p1} Almost surely the following is true.
Any distinct points $x_1,\ldots,x_{2d-1}\in\Phi$ in general
hyperplane position span a hyperplane $H\in\eta$.
\end{proposition}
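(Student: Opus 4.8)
The plan is to argue by contradiction via the multivariate Mecke equation. Suppose the asserted event fails on a set of positive probability; then with positive probability there exist distinct $x_1,\dots,x_{2d-1}\in\Phi$ in general hyperplane position whose spanning hyperplane $H^\ast$ does not belong to $\eta$. Since each $x_j\in\Phi$ is the intersection of exactly $d$ hyperplanes of $\eta$, such a configuration is witnessed by at most $(2d-1)d$ distinct hyperplanes $H_1,\dots,H_N\in\eta$ together with a choice, for each $j$, of a $d$-element index set $I_j$ with $\{x_j\}=\bigcap_{i\in I_j}H_i$. There are only finitely many such combinatorial types. Bounding the probability by the expected number of witnessing tuples and applying the multivariate Mecke equation \cite{LastPenrose17} to each type, it suffices to show that for every type the corresponding integral against $\lambda^N$ vanishes.

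The key combinatorial input is that, on the event $H^\ast\notin\eta$, every hyperplane $H_i$ contains at most $d-1$ of the points. Indeed, if $H_i$ contained $d$ of them, these would lie in the $(d-2)$-dimensional flat $H_i\cap H^\ast$ and so could not be affinely independent, contradicting general hyperplane position. Consequently, for each involved hyperplane $H$ at least $(2d-1)-(d-1)=d$ of the points avoid $H$; by general position any $d$ of these determine a unique hyperplane, which must equal $H^\ast$. This is exactly where the value $2d-1$ enters: it is the smallest count guaranteeing that the points off any single $H$ still pin down $H^\ast$.

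Now fix the directions $u_1,\dots,u_N$ (the disintegration of $\lambda^N$ is $\BQ^N(du)$ times Lebesgue measure $ds$). For fixed directions each $x_j$ is an affine function of the offsets $s_1,\dots,s_N$, so the coplanarity of $x_1,\dots,x_{2d-1}$ --- equivalently the vanishing of the $d-1$ determinants expressing that $x_{d+1},\dots,x_{2d-1}\in\aff\{x_1,\dots,x_d\}$ --- is a polynomial condition in the offsets. To see it is not identically zero, pick any involved hyperplane $H$ with offset $s_H$. Keeping all other offsets at generic (Lebesgue-typical) values, the $\ge d$ points off $H$ stay fixed and determine a fixed hyperplane $H^\ast$, while any point $x_j$ lying on $H$ slides, as $s_H$ varies, along the line $\bigcap_{i\in I_j\setminus\{H\}}H_i$. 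For generic fixed offsets this line is transverse to $H^\ast$, so $x_j$ leaves $H^\ast$ and coplanarity fails except for isolated $s_H$. Hence the coplanar offsets form a Lebesgue-null set, the $ds$-integral vanishes for $\BQ^N$-almost every (in fact every) direction tuple, and each Mecke term is zero.

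The main obstacle is the transversality claim, which must hold for every admissible $\BQ$, including directional distributions with finite support where the directions cannot be perturbed. The slide direction of $x_j$ depends only on the fixed directions, whereas the normal of $H^\ast$ depends also on the offsets of the hyperplanes defining the $d$ off-$H$ points; varying those offsets tilts $H^\ast$, and because $\BQ$ is not concentrated on a great subsphere the available normals span $\R^d$, so $H^\ast$ can be tilted out of any prescribed orientation. This forces the transversality condition to be a non-trivial polynomial in the remaining offsets, hence satisfied Lebesgue-almost everywhere, which is all the argument requires. Finally, the bound $2d-1$ is sharp: with only $2d-2$ points a hyperplane used $d-1$ times leaves just $d-1$ points off it, too few to fix $H^\ast$, so coplanarity can be preserved while $H^\ast\notin\eta$, and the statement indeed fails.
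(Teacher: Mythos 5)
Your overall frame (bound the bad event by a sum over combinatorial types, apply the multivariate Mecke equation, observe that on the bad event each hyperplane can carry at most $d-1$ of the points so that at least $d$ points off any given $H_i$ pin down $H^\ast$) matches the skeleton of the paper's argument. The gap is in the step you yourself flag as the main obstacle: the claim that for Lebesgue-generic values of the remaining offsets the slide line $L_j=\bigcap_{i\in I_j\setminus\{H\}}H_i$ is transverse to $H^\ast$. Your justification --- that varying the offsets of the hyperplanes defining the off-$H$ points ``tilts $H^\ast$ out of any prescribed orientation'' because the normals in the support of $\BQ$ span $\R^d$ --- is not correct as stated, and the appeal to $\BQ$ is beside the point since the directions are already fixed in the inner $ds$-integral. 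Concretely, if two of the off-$H$ blocks share $d-1$ hyperplanes, the difference of the two corresponding points is forced to be parallel to the fixed direction of the line $\bigcap_{i\in I_k\cap I_{k'}}H_i$, so the direction space of $H^\ast$ contains a direction that does not move at all as the offsets vary; the normal of $H^\ast$ is then confined to a great subsphere no matter how the offsets are chosen. If in addition some block through $H$ has $\mathrm{dir}(L_j)$ equal to that pinned direction (which can happen for finitely supported $\BQ$, e.g.\ when two hyperplanes of $I_j\setminus\{H\}$ repeat the normals of the two shared hyperplanes), the orientation condition you rely on is satisfied \emph{identically}, and one must instead argue via the affine part of the containment $L_j\subset H^\ast$. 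Whether that residual condition is non-degenerate depends delicately on which blocks share which hyperplanes, so the ``non-trivial polynomial'' assertion requires a genuine case analysis over combinatorial types that your sketch does not supply. This is precisely the hard core of the proposition.

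For comparison, the paper avoids the transversality computation altogether. It first proves the auxiliary fact \eqref{e9875} that almost surely no hyperplane of $\eta$ contains a non-empty closed set that is a measurable function of $m$ \emph{other} hyperplanes of $\eta$ (this is where the single $ds$-integral $\int\I\{x\in H_{u,s}\}\,ds=0$ is used). Then, assuming $H\notin\eta$, it runs a descending induction on the cardinality of subblocks $I$ to show $\bigcap_{j\in I}H_j\subset H$: in the inductive step the only dangerous case is $\dim(H_1\cap\dots\cap H_{k-1}\cap H)=d-k$, which forces $H_1\cap\dots\cap H_{k-1}\cap H\subset H_k$; since $k$ lies in at most $d-1$ blocks, $H$ (and hence the left-hand side) is a function of hyperplanes other than $H_k$, so \eqref{e9875} kills this case. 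Taking $|I|=1$ yields $H_k=H$ for every $k$, a contradiction. If you want to salvage your route, you would need to replace the tilting heuristic by an argument of this inductive type (or an explicit verification, type by type, that the coplanarity polynomial is not identically zero in the offsets), at which point you have essentially reconstructed the paper's proof.
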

\begin{proof} We start the proof with an auxiliary observation.
Let $m\in\N$ and let $f$ be a measurable function
on $(\BH^{d-1})^m$ taking values in the space of all non-empty closed subsets
of $\R^d$. (We equip this space
with the usual Fell--Matheron topology; see \cite{SW08}).
We assert that
\begin{align}\label{e9875}
\BP(\text{there exist distinct $H_1,\ldots,H_{m+1}\in\eta$ such
that $f(H_1,\ldots,H_m)\subset H_{m+1}$})=0.
\end{align}
Obviously the indicator function of the event in \eqref{e9875}
can be bounded by
\begin{align*}
X:=\sideset{}{^{\ne}}\sum_{H_1,\ldots,H_{m+1} \in \eta}\I\{f(H_1,\ldots,H_m)\subset H_{m+1}\}.
\end{align*}
If $\BE[X]=0$, then \eqref{e9875} follows.
By the multivariate Mecke formula \cite[Theorem 4.5]{LastPenrose17},
\begin{align*}
\BE[X]=\int \I\{f(H_1,\ldots,H_m)\subset H_{m+1}\}\,\lambda^{m+1}(d(H_1,\ldots,H_{m+1})).
\end{align*}
By Fubini's theorem it then enough to prove that
\begin{align}\label{e3.523}
\int \I\{F\subset H\}\,\lambda(dH)=0
\end{align}
for any non-empty closed set $F\subset\R^d$.
By monotonicity of integration it is sufficient 
to assume that $F=\{x\}$ for some $x\in\R^d$.
But then \eqref{e3.523} directly follows from \eqref{elambda}
and $\int \I\{\langle x,u\rangle=r\}\,dr=0$ for each $u\in \BS^{d-1}$.

We now turn to the main part of the proof.
Let $I_1,\ldots,I_{2d-1}\subset\N$ 
be distinct  with $|I_1|=\cdots=|I_{2d-1}|=d$. We shall refer to these sets
as {\em blocks} and to subsets of blocks as {\em subblocks}.
For convenience we assume that $I_1=[d]:=\{1,\ldots,d\}$.
Assume that $\cup^{2d-1}_{i=1} I_i=[n]$ for some $n\ge d$.  
Consider $(H_1,\ldots,H_{n})\in\eta^{n}$ with $H_i\ne H_j$ for $i\ne j$
and the following properties.
For each $i\in\{1,\ldots,2d-1\}$ we have that
$\cap_{j\in I_i} H_j$ consists of a single point $x_i$
and $x_1,\ldots,x_{2d-1}$ are in general hyperplane position.
Let $H$ be affine hull of $\{x_1,\ldots,x_{2d-1}\}$.
We will show that almost surely $H\in\{H_1,\ldots,H_{2d-1}\}$. 

Let us assume on the contrary that $H\notin\{H_1,\ldots,H_{2d-1}\}$.
Then each $k\in[n]$ (for instance $k=1$) belongs to at most
$d-1$ of the blocks. Indeed, by the general hyperplane
assumption we would otherwise have that $H_1=H$.
We will show that almost surely
\begin{align}\label{ecomb1}
\cap_{j\in I} H_j\subset H
\end{align}
for all subblocks $I$.

We prove \eqref{ecomb1}
by (descending) induction on the cardinality
$k$ of $I$. In the case $k=d$ \eqref{ecomb1} holds by definition
of $H$.
So assume that \eqref{ecomb1} holds for
all subblocks of cardinality $k\in\{2,\ldots,d\}$.
We need to show that it holds for each subblock
$I$ of cardinality $k-1$. For notational convenience
we take $I=[k-1]$. By induction hypothesis we have
that
\begin{align}\label{ee1}
H_1\cap\cdots\cap H_k\subset H.
\end{align}
Set $H':=H_1\cap\cdots\cap H_{k-1}\cap H$.
Since $H_1\cap\cdots\cap H_{k-1}\ne\emptyset$
we have (almost surely) that $\dim H_1\cap\cdots\cap H_{k-1}=(d-(k-1))$.
Since $\dim H=d-1$ we therefore obtain that $\dim H'\in\{d-k,d-(k-1)\}$.
Let us first assume that $\dim H'=d-k$.
By \eqref{ee1} (and since $H_1\cap\cdots\cap H_k\ne\emptyset$) 
we have that $\dim H_k\cap H'=d-k$. Therefore we obtain that
$H_k\cap H'=H'$, that is $H'\subset H_k$. Since $k$ is contained
in at most $d-1$ of the subblocks, for instance in $I_1,\ldots,I_{d-1}$,
the blocks $I_d,\ldots,I_{2d-1}$ still generate $H$,
that is $H=\aff\{x_d,\ldots,x_{2d-1}\}$. Therefore 
$H'$ is ``independent'' of $H_k$, contradicting $H'\subset H_k$.
More rigorously we can apply \eqref{e9875} to conclude that this
case can almost surely not occur.
Let us now assume that $\dim H'=d-(k-1)$. Then
$\dim H'=\dim H_1\cap\cdots\cap H_{k-1}$ and therefore
$H'=H_1\cap\cdots\cap H_{k-1}$.
This means that $H_1\cap\cdots\cap H_{k-1}\subset H$,
as required to finish the induction.

Using \eqref{ee1} for subblocks of size $1$, yields
that $H_k=H$ for each $k\in[n]$.
This contradiction finishes the proof of the lemma.
\end{proof}

\begin{remark}\rm In general it is not possible to reduce
the number $2d-1$ of points featuring in Proposition \ref{p1}. 
To see this, we may consider the case $d=3$ and
a directional distribution which is concentrated
on $\{e_1,e_2,e_3,-e_1,-e_2,-e_3\}$, where
$\{e_1,e_2,e_3\}$ is an orthonormal system.
In that case there exist infinitely many choices of four
intersection points in general hyperplane position
whose affine hull is not a hyperplane from $\eta$.
Indeed, the hyperplanes tessellate space into
cuboids and the four points can be chosen as endpoints 
of diametrically oposed edges of any cuboid.
\end{remark}

Our algorithm requires some notation. Let
$$
d(x,K):=\min\{\|y-x\|:y\in \R^d\}
$$
denote the Euclidean distance between $x\in\R^d$ and $K$
and let
\begin{align}
K_r:=\{x\in K^c:d(x,K)\le r\}
\end{align}
denote the {\em outer parallel set} of $K$ at distance $r\ge 0$.
Note that $K_0=\emptyset$.
Define random times $T_n$, $n\ge 1$, inductively by setting
\begin{align*}
T_{n+1}:=\min\{r>T_n:\Phi\cap (K_r\setminus K_{T_n})\ne\emptyset\},
\end{align*}
where $T_0:=0$. We form a (random) set  $\xi_n$ of hyperplanes 
as follows. A hyperplane $H$ belongs to $\xi_n$ if it
does not intersect $K$ and if it contains $2d-1$ different points
from $\Phi\cap K_{T_n}$ in general hyperplane position.
By Proposition \ref{p1} we have almost surely that $\xi_n\subset\eta$.
For a hyperplane $H$ with $H\cap K=\emptyset$ 
we let $H(K)$ denote the half-space bounded by $H$ with $K\subset H(K)$.

\begin{figure}[t]
  \centering
  \includegraphics[width=\textwidth]{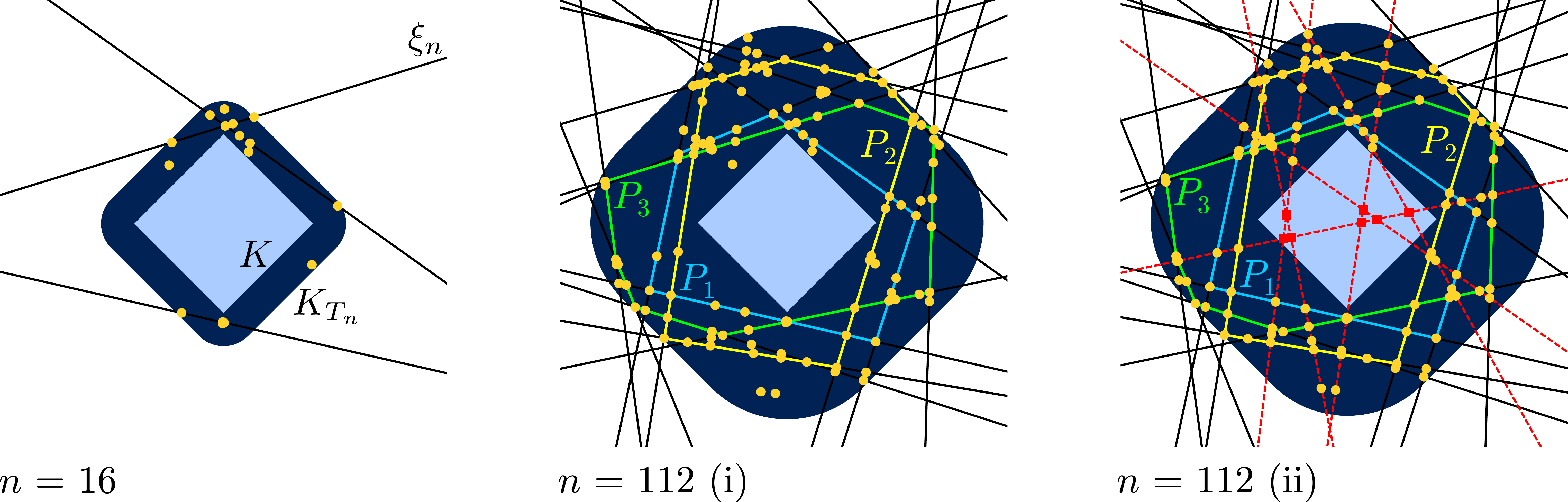}
  \caption{Reconstruction algorithm of $\eta\cap [K]$: Given a convex 
  domain $K$, the algorithm recursively scans the points in $\Phi\cap 
  K_{T_n}$ (solid circles).
  At step $n=16$, three hyperplanes are reconstructed (left).
  At step $n=112$, three polygons $P_1$, $P_2$, $P_3$ are reconstructed 
  within the outer parallel set $K_{T_n}$ (center).
  Hence, all hyperplanes in $\eta\cap [K]$ (dashed lines) 
  can be reconstructed (right).}
  \label{fig:algr}
\end{figure}

\begin{algorithm}\label{algr}\rm
The algorithm iterates over the random times $T_n$, $n\ge 1$,
(recursively) scanning the points in $\Phi\cap K_{T_n}$.
If the algorithm stops at time $T_n$, 
then it returns a set $\chi_n$ of hyperplanes
that will be proved to coincide (almost surely) with $\eta\cap [K]$.
Stage $n$ of the algorithm is defined as follows 
(cf.~Fig.~\ref{fig:algr}):

\begin{enumerate}
\item[(i)] Determine $\xi_n$ and check whether 
there are integers $k_1,\ldots,k_{2d-1}$ and
distinct $H_{i,j}\in\xi_n$ ($i\in[k_j]$, $j\in[2d-1]$)
such that the boundary of
\begin{align*}
  P_j:=\bigcap^{k_j}_{i=1}H_{i,j}(K) 
\end{align*}
is contained in $K_{T_n}$ for each $j\in[2d-1]$.
If such hyperplanes do not exist, the algorithm continues with stage $n+1$. 
If they do exist, the algorithm continues with step (ii) and stops after it.
\item[(ii)] Find all collections of $2d-1$ points 
in $\Phi\cap K_{T_n}$ in general hyperplane position such
that the generated hyperplane intersects $K$.
If there are such points, $\chi_n$ is the set of all those hyperplanes.
If there are no such points, then $\chi_n:=\emptyset$.
\end{enumerate}

Let $T:=T_n$ if the algorithm stops at stage $n$.
We set $T:=\infty$ if it never stops. We can interpret $T$
as the running time of the algorithm in continuous time.
In the next section we will not only show that $T$ is (almost surely) 
finite but does also have exponential moments.
Here we wish to assure ourselves of the essentially geometric fact that 
the algorithm indeed determines $\eta\cap [K]$.
\end{algorithm}

\begin{proposition}\label{palg} On the event $\{T<\infty\}$ we have almost surely
that $\chi_T=\eta\cap [K]$.
\end{proposition}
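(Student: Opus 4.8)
The plan is to work on the event $\{T<\infty\}$, where $T=T_n$ for some finite $n$, and to establish the two inclusions $\chi_T\subseteq\eta\cap[K]$ and $\eta\cap[K]\subseteq\chi_T$ separately. The first is the routine direction: by step (ii) every $H\in\chi_T$ is the affine hull of $2d-1$ distinct points of $\Phi\cap K_{T_n}$ in general hyperplane position, and it meets $K$. On the almost sure event on which the conclusion of Proposition~\ref{p1} holds, any such hyperplane lies in $\eta$, so $H\in\eta\cap[K]$.

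For the reverse inclusion I would fix $H\in\eta\cap[K]$ and show that $H$ carries $2d-1$ points of $\Phi\cap K_{T_n}$ in general hyperplane position; step (ii) then detects $H$, giving $H\in\chi_{T_n}=\chi_T$. The tool is the family $P_1,\dots,P_{2d-1}$ of certifying polytopes produced by the stopping rule. First I would note that each $P_j=\bigcap_i H_{i,j}(K)$ is a \emph{bounded} convex polytope with $K\subseteq\operatorname{int}P_j$: its facet hyperplanes lie in $\xi_n\subseteq\eta$ and avoid $K$, while $\partial P_j\subseteq K_{T_n}$ is bounded, and a closed convex set with nonempty interior and bounded boundary is compact. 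Since $H$ meets $K\subseteq\operatorname{int}P_j$ and $P_j$ is bounded, the slice $Q_j:=H\cap P_j$ is a full-dimensional bounded polytope inside the hyperplane $H$, with $\partial Q_j=H\cap\partial P_j\subseteq K_{T_n}$. Each vertex of $Q_j$ is the transversal intersection of $H$ with an edge of $P_j$, hence of the form $H\cap H_{i_1,j}\cap\cdots\cap H_{i_{d-1},j}$: it is a point of $\Phi$ lying on $\partial P_j\subseteq K_{T_n}$. As a full-dimensional $(d-1)$-polytope, $Q_j$ has at least $d$ vertices, so each slice already supplies at least $d$ points of $\Phi\cap H\cap K_{T_n}$ that affinely span $H$.

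It then remains to extract $2d-1$ of these vertices in general hyperplane position, i.e.\ $2d-1$ points on $H$ no $d$ of which lie on a common $(d-2)$-flat; this is where I expect the main obstacle. Two kinds of affine dependence must be excluded. Unforced dependencies---$d$ vertices on a common $(d-2)$-flat that is not of the form $H\cap H'$ with $H'\in\eta$---are ruled out almost surely by a Mecke--Fubini computation exactly in the spirit of the auxiliary estimate~\eqref{e9875}, since requiring $d$ points, each an intersection of $d$ hyperplanes, to be affinely dependent imposes an extra linear constraint of positive codimension under the relevant power of $\lambda$. Forced dependencies---$d$ vertices sharing a generating hyperplane $H'$, hence lying on the induced flat $H\cap H'$---are the genuinely combinatorial difficulty, and this is precisely what the choice of $2d-1$ certificates is made to defeat: each induced flat $H\cap H'$ may legitimately carry up to $d-1$ of the chosen points, and with at least $d$ vertices available from each of the $2d-1$ slices one has enough room to select $2d-1$ vertices no generating hyperplane of which is shared by $d$ of them. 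I would carry out this selection greedily, adding vertices from successive slices while discarding any that would create a $d$-fold coincidence on an already-used induced flat, and checking that the supply from the $2d-1$ slices never runs out before $2d-1$ points are collected.

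Finally, the $2d-1$ points so obtained lie on $H$, are in general hyperplane position, and have affine hull $H$, which meets $K$; hence step (ii) places $H$ into $\chi_{T_n}=\chi_T$. Intersecting the almost sure events used above (the conclusion of Proposition~\ref{p1} and the Mecke--Fubini non-degeneracy) yields $\eta\cap[K]\subseteq\chi_T$ almost surely, completing the proof. The delicate point throughout is the general-position extraction: the geometry (boundedness of the $P_j$ and the slicing) is clean, but the bookkeeping that turns ``$2d-1$ polytopes'' into ``$2d-1$ points with every $d$-subset affinely independent'' is where the constant $2d-1$ earns its place and must be handled with care.
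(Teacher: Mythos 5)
Your overall strategy matches the paper's: the inclusion $\chi_T\subseteq\eta\cap[K]$ follows from Proposition~\ref{p1}, and for the converse you use the certifying polytopes $P_1,\dots,P_{2d-1}$ to produce, for each $H\in\eta\cap[K]$, points of $\Phi\cap K_{T_n}$ on $H$ whose general hyperplane position is then established by a Mecke--Fubini computation in the spirit of \eqref{e9875}. The "unforced dependency" part of your argument is exactly the paper's: one reduces to showing that for $\lambda$-a.e.\ $H$ and every affine $E\subset H$ of dimension at most $d-2$, the intensity measure of the intersection process induced on $H$ gives no mass to $E$, because that measure is proportional to Lebesgue measure on $H$.

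Where you diverge is in the selection of the $2d-1$ points, and this is where your write-up has a genuine gap. You collect \emph{all} vertices of all the slices $Q_j=H\cap P_j$ and then propose a greedy extraction to avoid what you call forced dependencies ($d$ chosen points sharing a generating hyperplane $H'$ and hence lying on the $(d-2)$-flat $H\cap H'$). You correctly identify this as the crux, but you only assert that "the supply never runs out" without proving it, and the bookkeeping is not trivial: within a single slice $Q_j$, a single facet hyperplane of $P_j$ can carry arbitrarily many vertices of $Q_j$, so the count "at least $d$ vertices per slice" does not by itself guarantee the greedy procedure succeeds. The paper avoids this issue entirely: step (i) of the algorithm requires the hyperplanes $H_{i,j}$ to be pairwise distinct \emph{across all} $2d-1$ polytopes, so one simply picks a single edge of each $P_i$ met by $H$, obtaining points $x_i$ with $\{x_i\}=H\cap\bigcap_{j\in I_i}H_j$ for pairwise disjoint blocks $I_i$ of $d-1$ distinct hyperplanes. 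No generating hyperplane is then shared by two of the chosen points, forced dependencies cannot occur, and only the measure-zero (unforced) degeneracy remains to be excluded. Your proof becomes complete (and simpler) if you replace the greedy extraction by this one-point-per-polytope selection; as written, the key combinatorial step is announced but not carried out.
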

\begin{proof} Assume that the algorithm stops at 
at stage $n$  
and let $P_1,\ldots,P_{2d-1}$ be as in step (i) of the algorithm.
These are bounded polytopes which contain
$K$ in their interior and which are made up of different hyperplanes from $\eta$.
Assume that $H\in\eta$ intersects $K$. Then $H$ intersects
for each $i\in[2d-1]$ the boundary of the polytope $P_i$,
and in fact, at least one of its edges. Therefore there exist distinct
hyperplanes $H_1,\ldots,H_{(2d-1)(d-1)}\in\eta\setminus\{H\}$ such 
that 
\begin{align*}
  H\cap \bigcap_{j\in I_i} H_j\ne \emptyset,\quad i\in[2d-1],
\end{align*}
where $I_i:=\{(i-1)(d-1)+1,\ldots,i(d-1)\}$.
Almost surely each of these intersections consists of only one point $x_i$, say.
We assert that these points are in general hyperplane position.
If they are not, then $d$ among those points, $x_1,\ldots,x_d$ say, 
are affinely dependent.
Then one of those points, $x_d$ say, must lie in $\aff\{x_1,\ldots,x_{d-1}\}$.
Therefore we need to show that
the probability of finding distinct $H_0,\ldots,H_{d(d-1)}\in\eta$ such that
$\{x_i\}:=H_0\cap \cap_{j\in I_i}H_j$ is a singleton for
each $i\in[d]$ and  
\begin{align*}
x_d\in\aff\{x_1,\ldots,x_{d-1}\}
\end{align*}
is zero. Similarly as in the proof of \eqref{e9875}
this probability can be bounded by
\begin{align*}
\iiint &\I\{|H\cap\cap_{j\in I_1}H_j|=\cdots=|H\cap\cap_{j\in I_d}H_j|=1\}\\
&\I\{H\cap \cap_{j\in I_d}H_j\subset
\aff \big(H\cap \cap_{j\in I_1}H_j,\ldots,H\cap\cap_{j\in I_{d-1}}H_j)\}\\
&\qquad
\lambda(dH)\,\lambda^{d(d-1)}(d(H_1,\ldots,H_{d(d-1)})).
\end{align*}
Therefore it is enough to show that for $\lambda$-a.e.\ $H\in\BH^{d-1}$
and each affine space $E\subset H$ of dimension at most $d-2$ 
\begin{align}\label{elambdaH}
\int \I\{|\cap_{j\in I_d}H_j|=1, \cap_{j\in I_d}H_j\subset E\}
\,\lambda^{d-1}_H(d(H_1,\ldots,H_{d-1}))=0,
\end{align}
where $\lambda_H$ is the measure on the space 
of all affine subspaces of $H$ given by
\begin{align*}
\lambda_H:=\int\I\{H'\cap H\in\cdot\}\,\lambda(dH').
\end{align*}
For $\lambda$-a.e.\ $H$, the measure $\lambda_H$ is concentrated
on the $(d-2)$-dimensional subspaces of $H$ and invariant
under translations in $H$. In fact, $\lambda_H$ is the intensity
measure of the Poisson process $\eta_H:=\{H'\cap H: H'\in\eta\}$.
Up to a constant multiple,
$$
B\mapsto \int \I\{B\cap H_1\cap\cdots \cap H_{d-1}\ne \emptyset\}\,\lambda^{d-1}_H(d(H_1,\ldots,H_{d-1}))
$$
is (as a function of the Borel set $B\subset H$)
the intensity measure of  the intersection process associated
with $\eta_H$
(see \cite[p.\ 135]{SW08}) and therefore
proportional to Lebesgue measure on $H$. 
(It can also be checked more directly, that
this function is a locally finite translation invariant measure.)
Hence \eqref{elambdaH} follows.
\end{proof}

\begin{remark}\rm Assume that
  that the directional distribution $\BQ$ is absolutely continuous
  with respect to Lebesgue measure on $\BH^{d-1}$. Then the
  algorithm can be considerably simplified. In step (i) it is enough  
  two find just two polytopes $P_1,P_2$ made up of distinct hyperplanes
  in $\xi_n$. Any hyperplane $H$ from $\eta$ that intersects $K$,
  intersects the boundary of the polytope $P_1$ in $d$ affinely independent points from $\Phi$
  and the same applies to $P_2$ (even without further assumptions of $\BQ$). With some
  efforts it can be shown that the resulting $2d$ intersection points are almost
  surely in general hyperplane position. The forthcoming Theorem \ref{tstrongrigid1}
  remains valid. We do not go into the technical details.
\end{remark}

\begin{remark}\rm 
  The reconstruction algorithm~\ref{algr} is not optimized for computational
  efficiency.  For instance, the algorithm could already be stopped
whenever there exists just one polytope which contains $K$ in its
interior but no points of $\Phi$ in the relative interior of its edges.
(In this case $\eta\cap[K]=\emptyset$.)
Moreover, it is not necessary that the 
boundaries of the polytopes $P_j$  are
completely contained in $K_{T_n}$. 
It would suffice to find polyhedral sets with sufficiently large parts
of their boundaries contained in $K_{T_n}$.
\end{remark}

\section{Strong rigidity}\label{srigid}

In this section we shall exploit
the algorithm from Section \ref{sreconstruction}
to show that the intersection processes
$\Phi_1,\ldots,\Phi_m$ associated with a
Poisson hyperplane process have very strong rigidity properties.

We start with giving a few definitions. Let $\Psi$ be a random
measure on $\R^d$ (for instance one of the $\Phi_1,\ldots,\Phi_m$).
For a Borel set $B\subset\R^d$ we denote by $\Psi_B:=\Psi(\cdot\cap B)$ the restriction
of $\Psi$ to $B$. 
A mapping
$Z$ from $\Omega$ into the space of non-empty closed subsets of $\R^d$
is called {\em $\Psi$-stopping set}
if $\{Z\subset F\}:=\{\omega\in\Omega:Z(\omega)\subset F\}$ is for each closed set $F\subset \R^d$
an element of the $\sigma$-field $\sigma(\Psi_F)$ generated by $\Psi_F$.
(In particular $Z$ is then a {\em random closed set}  \cite{Molchanov05}.)
By $\Psi_Z$ we understand the restriction of $\Psi$ to $Z$
(that is the random measure $\omega\mapsto \Psi(\omega)_{Z(\omega)}$.)
If $Z$ is a $\Psi$-stopping set, then we say that $\eta\cap [K]$ is 
{\em almost surely determined by $\Psi_Z$},
if there exists a measurable mapping $f$ (with suitable domain)
such that $\eta\cap [K]=f(\Psi_Z)$ holds almost surely.

The following result shows that $\eta\cap[K]$ is almost surely determined
by a $\Phi$-stopping set $Z\subset K^c$ of exponentially small size.
Here we quantify the size of a closed set $F\subset\R^d$ by
the radius $R(F)$ of the smallest ball centred at the origin and containing $F$.
(If $F$ is not bounded then we set $R(F):=\infty$.)

\begin{theorem}\label{tstrongrigid1} Let $K\subset\R^d$ be convex and compact. Then
there exists a $\Phi$-stopping set $Z$ with $Z\subset K^c$ and such
that $\eta\cap [K]$ is almost surely determined by $\Phi\cap Z$. 
Moreover, there exist constants $c_1,c_2>0$ such
that
\begin{align}\label{eexp}
\BP(R(Z)>s)\le c_1e^{-c_2s},\quad s\ge 1.
\end{align}
\end{theorem}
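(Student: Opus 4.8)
The plan is to let $T$ be the running time of Algorithm~\ref{algr} and to take the stopping set to be $Z:=\overline{K_T}$, the closure of the outer parallel set reached when the algorithm halts. On $\{T<\infty\}$ Proposition~\ref{palg} already identifies $\eta\cap[K]$ with $\chi_T$, and $\chi_T$ is built (in step (ii)) solely from the points of $\Phi$ lying in $K_{T}$; since $\Phi$ almost surely charges no point of the fixed set $\partial K$, we have $\Phi\cap Z=\Phi\cap K_T$ and $Z\setminus K^c\subset\partial K$, so $\eta\cap[K]=f(\Phi\cap Z)$ for a measurable $f$. To see that $Z$ is a $\Phi$-stopping set I would first check that $T$ is a stopping time for the filtration $\mathcal F_r:=\sigma(\Phi\cap K_r)$: the successive times $T_n\le r$, the families $\xi_n$, and the step-(i) halting decision are all computable from $\Phi\cap K_{T_n}\subset\Phi\cap K_r$, so $\{T\le r\}\in\mathcal F_r$. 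Because $r\mapsto \overline{K_r}$ is nondecreasing, for a closed $F$ one has $\{Z\subset F\}=\{T\le r_F\}$ with $r_F:=\sup\{r:\overline{K_r}\subset F\}$ (with the usual care when the supremum is attained), and since $\overline{K_{r_F}}\subset F$ this event lies in $\sigma(\Phi\cap K_{r_F})\subset\sigma(\Phi_F)$, which is the required stopping-set property.

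Since $R(Z)\le R(K)+T$, the bound \eqref{eexp} (and the almost sure finiteness of $T$) follows once I show $\BP(T>t)\le c_1e^{-c_2t}$. Here I would use a two–scale estimate. Fix a small $\epsilon>0$ and decompose
\begin{align*}
\BP(T>t)\le \BP(E_t^c)+\BP(E_t\cap F_t^c),
\end{align*}
where $E_t$ is the event that the hyperplanes of $\eta$ meeting $B(0,\epsilon t)$ already contain $2d-1$ bounded polytopes enclosing $K$ with all vertices inside $B(0,t)$, and $F_t$ is the event that every $H\in\eta$ with $H\cap B(0,\epsilon t)\ne\emptyset$ carries $2d-1$ points of $\Phi$ in general hyperplane position inside $B(0,t)$. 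On $E_t\cap F_t$ every face of the enclosing polytopes is certified (lies in $\xi_n$), while their boundaries, the certifying points and all reconstruction points sit in $K_t$; hence the step-(i) condition is met no later than the stage at which $K_{T_n}$ reaches radius $t$, so $T\le t$.

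To bound $\BP(E_t^c)$ I note that $\eta$ restricted to $[B(0,\epsilon t)]$ is Poisson with intensity of order $t$, and that since $\BQ$ is not concentrated on a great subsphere its support positively spans $\R^d$; choosing finitely many direction cones of positive $\BQ$–mass whose normals positively span, the absence in $B(0,\epsilon t)$ of a hyperplane with normal in a given cone and distance in $[\tfrac{\epsilon t}{2},\epsilon t]$ has probability $e^{-ct}$, and such far hyperplanes bound a polytope containing $K$; a union bound gives $\BP(E_t^c)\le c_1e^{-c_2t}$. For $\BP(E_t\cap F_t^c)\le\BP(F_t^c)$ I would estimate the expected number of uncertified hyperplanes by the Mecke formula,
\begin{align*}
\BE\!\Big[\sum_{H\in\eta}\I\{H\cap B(0,\epsilon t)\ne\emptyset\}\,\I\{H\text{ uncertified in }B(0,t)\}\Big]
=\int \I\{H\cap B(0,\epsilon t)\ne\emptyset\}\,p_H(t)\,\lambda(dH),
\end{align*}
where $p_H(t)$ is the probability, given $H\in\eta$, that $H$ carries fewer than $2d-1$ general-position points of $\Phi$ in $B(0,t)$. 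Under this Palm conditioning the traces $\{H'\cap H:H'\in\eta\}$ form a stationary Poisson hyperplane process in $H\cong\R^{d-1}$ with intensity $\lambda_H$ as in the proof of Proposition~\ref{palg}, whose number of hyperplanes meeting $B(0,t)\cap H$ is Poisson with mean of order $t$; a Poisson lower–tail estimate then gives $p_H(t)\le e^{-ct}$, and since $\lambda([B(0,\epsilon t)])=O(t)$ the whole expectation is $O(t\,e^{-ct})$, whence $\BP(E_t\cap F_t^c)\le c_1e^{-c_2t}$.

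The main obstacle is the geometric bookkeeping tying these two events to the algorithm. I must arrange the enclosing polytopes so that they are genuinely bounded with vertices controlled inside $B(0,t)$ (by keeping the chosen normals quantitatively away from degeneracy), verify that the points of $\Phi$ on a face inside $B(0,t)$ are exactly the vertices of the induced arrangement on that face and that $2d-1$ of them are automatically in general hyperplane position once enough of them are present, and confirm that $E_t\cap F_t$ forces all data required by steps (i)--(ii)—polytope boundaries, certifying points, and reconstruction points—into $K_t$. Checking that the induced process on a $\lambda$-typical face is again non-degenerate (its directional distribution not concentrated on a great subsphere of $\BS^{d-2}$), so that its vertex process has positive intensity and the lower–tail estimate applies uniformly in $H$, is the delicate non-routine point.
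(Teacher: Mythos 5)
Your overall architecture coincides with the paper's: run Algorithm~\ref{algr}, set $Z$ equal to the outer parallel set $K_T$ at the (random) halting time, invoke Proposition~\ref{palg} for the identification $\eta\cap[K]=\chi_T$, check the stopping-set property, and reduce \eqref{eexp} to an exponential tail for $T$. Your filtration argument for the stopping-set property is a legitimate alternative to the paper's appeal to \cite[Proposition A.1]{BaumstarkLast09} (which reduces the matter to the consistency identity $\tilde Z((\psi\cap\tilde Z(\psi))\cup\varphi)=\tilde Z(\psi)$); both work, and the reduction $R(Z)\le R(K)+T$ is the same.

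The divergence, and the genuine gap, is in the tail estimate. Your event $F_t$ asks that \emph{every} hyperplane of $\eta$ meeting $B(0,\epsilon t)$ be certified inside $B(0,t)$, and you propose to bound $\BP(F_t^c)$ by Mecke plus a Poisson lower tail on the number of induced flats $H'\cap H$ meeting $B(0,t)\cap H$. But having many flats meet a ball does not produce vertices in that ball: the flats can pairwise intersect far outside $B(0,t)$, and even when vertices are present you still owe an argument that $2d-1$ of them are in general hyperplane position (a nontrivial event for the induced arrangement). So the step $p_H(t)\le e^{-ct}$ is unsupported as written; repairing it would force you to redo, inside each $H$, essentially the same slab construction you use for $E_t$, together with a uniform non-degeneracy statement for the directional part of $\lambda_H$. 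The paper sidesteps all of this with a mutual-certification trick: it fixes $2d$ direction neighborhoods $U_i$ from the support of $\BQ$ and slabs $A_i(t)$ of hyperplanes at distance in $(R,R+t]$, requires only $|\eta\cap A_i(t)|\ge 2d-1$ for each $i$, and observes that the resulting $2d(2d-1)$ hyperplanes form $2d-1$ bounded polytopes whose faces certify \emph{one another}: the points $\{x_j\}=H_{1,1}\cap\bigcap_{i=2}^{d}H_{i,j}$ are $2d-1$ points of $\Phi$ on $H_{1,1}$, almost surely in general hyperplane position by the argument of Proposition~\ref{palg}, and lie within radius $b'(R+t)$ by the determinant bound \eqref{edist}. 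This reduces the whole tail estimate to $2d$ elementary Poisson lower tails with mean $\gamma t\,\BQ(U_i)$ and avoids any Palm conditioning. I recommend you replace the $F_t$ component of your decomposition by this self-certification argument; your $E_t$ component is already essentially the paper's construction of the polytopes $P_j$.
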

\begin{proof} We consider the algorithm from Section \ref{sreconstruction}
with running time $T$, defined after Algorithm~\ref{algr}.
We assert that $Z:=K_{T}$ has all desired properties,
where $K_\infty:=K^c$. The inclusion $Z\subset K^c$ is a direct consequence
of the definitions. The stopping set property can be considered as pretty much
obvious. The reader might wish to skip the following technical argument.
Define $\bN$ as the set of all locally finite subsets of $\R^d$.
The algorithm from Section \ref{sreconstruction} can be used (in an obvious way)
to define a measurable mapping $\tilde{Z}$ from $\bN$ (equipped with
the standard $\sigma$-field) to the space of all closed
subsets of $\R^d$ such that $Z=\tilde Z(\Phi)$.
We need to show that $\tilde Z$ is a stopping set, that is
$\{\mu:\tilde Z(\mu)\subset F\}$ is for all closed sets $F\subset\R^d$
an element of the $\sigma$-field generated by the mapping
$\mu\mapsto \mu\cap F$ from $\bN$ to $\bN$. To prove this we use
\cite[Proposition A.1]{BaumstarkLast09}. According to this proposition
it is sufficient to show that 
$\tilde{Z}((\psi\cap \tilde{Z}(\psi))\cup\varphi)=\tilde{Z}(\psi)$
for all $\psi,\varphi\in\bN$ with $\varphi\subset\tilde{Z}(\psi)^c$.
But this follows from the definition of the algorithm. Indeed, suppose
that $\psi\in\bN$ is a realization of the intersection process
and that the algorithm stops at time $t$. Restricting $\psi$ to $K_t$ and
then adding a configuration $\varphi$ in the complement of $K_t$ does not change the
running time $t$. 

We show \eqref{eexp} by modifying 
the idea of the proof of Lemma 1 in \cite{Schneider19}.
Since $\BQ$ is not concentrated on a great subsphere there exist
linearly independent vectors $e_1,\ldots,e_{d}\in\R^d$ in the support of $\BQ$.
Since $\BQ$ is even, the vectors $e_{d+1}:=-e_1,\ldots,e_{2d}:=-e_{d}$ are
also in the support of $\BQ$.
We can then find a (large) constant $b>0$ and
(small) pairwise disjoint 
closed neighborhoods $U_i$  of $e_i$, $i\in\{1,\ldots,2d\}$, such that 
$U_{d+i}=\{-u:u\in U_i\}$ and each intersection
\begin{align*}
P=\bigcap^{2d}_{i=1} H^-(u_i,1)
\end{align*}
with $u_i\in U_i$, $i\in\{1,\ldots,2d\}$,
is a polytope with $R(P)\le b$. 
Here we write, for given $u\in\R^d$ and $s\in\R$,
$H^-(u,s):=\{y\in\R^d:\langle y,u\rangle \le s\}$.
Let $t\ge 0$. From linearity of the scalar product we then obtain
that
\begin{align}\label{eRbound}
R\bigg(\bigcap^{2d}_{i=1} H^-(u_i,t_i)\bigg)\le b(R(K)+t),
\end{align}
whenever $R(K)\le t_i\le R(K)+t$ and $u_i\in U_i$ for $i\in\{1,\ldots,2d\}$.

We need a straightforward analytic fact.
Since the determinant is a continuous function we can assume
that there exists $a>0$ such that
\begin{align}\label{edet}
|\det(u_1,\ldots,u_d)|\ge a,\quad 
(u_1,\ldots,u_d)\in {U}_1\times\cdots\times {U}_d.
\end{align}
For $i\in\{1,\ldots,d\}$
let $u_i\in U_i\cup U_{d+i}$ and $s_i\in\R$. Then
$H_{u_1,s_1}\cap\cdots\cap H_{u_d,s_d}$ consists of a single
point $x$ (by \eqref{edet} and $U_{d+i}=-U_i$),
whose Euclidean norm can be bounded as
\begin{align}\label{edist}
  \|x\|\le b'\max\{|s_i|:i=1,\ldots,d\},
\end{align}
where $b'>0$ is a constant that depends only on the dimension
and the (fixed) sets $U_1,\ldots,U_d$.
To see this we note that $x$ (now interpreted as a column vector)
is the unique solution of the linear equation $Ax=s$,
where $A$ is the matrix with rows $u_1,\ldots,u_d$ and $s$ is
the column vector with entries $s_1,\ldots,s_d$.
By \eqref{edet} we have that $x=A^{-1}s$. It is well-known that
\begin{align*}
\|x\|_\infty\le \|A^{-1}\|_\infty \|s\|_\infty,
\end{align*}
where $\|x\|_{\infty}:=\max\{|x_i|:i=1,\ldots,d\}$
and $\|A^{-1}\|_\infty$ is the maximum absolute row sum of $A^{-1}$.
In view of the explicit expression of $A^{-1}$ in terms
of $\det(A)^{-1}$ and the minors of $A$ and
the minimum principle for continuous functions we have
that $\|A^{-1}\|_\infty$ is bounded from above by a positive
constant. (Recall that $u_1,\ldots,u_d$ are unit vectors.)
Since $\|x\|\le c \|x\|_{\infty}$ for some $c>0$ we obtain \eqref{edist}.

For notational simplicity we now assume that $K$ is a ball
with radius $R$ centred at the origin. In fact, in view of the assertion
this is no restriction of generality.
Consider the following sets of hyperplanes:
\begin{align*}
A_i(t):=\{H(u,s):u\in U_i,R< s\le R+t\},\quad i\in[2d].
\end{align*}
We assert the event inclusion
\begin{align}\label{e3.44}
\bigcap^{2d}_{i=1}\{|\eta\cap A_i(t)|\ge 2d-1\}\subset \{R(Z)\le b''(R+t)\},\quad \BP\text{-a.s.}, 
\end{align}
where $b'':=\max\{b,b'\}$ with $b'$ as in \eqref{edist}.

To show \eqref{e3.44}, we assume that $|\eta\cap A_i(t)|\ge 2d-1$ for each $i\in[2d]$.
Then we can find distinct hyperplanes $H_{i,j}\in\eta$ 
($i\in[2d]$, $j\in[2d-1]$) not intersecting $K$
such that the polytopes
\begin{align*}
P_j:=\bigcap^{2d}_{i=1} H_{i,j}(K),\quad j\in[2d-1],
\end{align*}
contain $K$ in their interior and satisfy $R(P_j)\le b(R+t)$;
see  \eqref{eRbound}. Next we show that
each $H_{i,j}$ is in $\xi_n$ as soon as
$T_n\ge b''(R+t)-R$. (Then our algorithm has
identified these hyperplanes by time $T_n$.)
Take $H_{1,1}$, for instance. Define $x_1,\ldots,x_{2d-1}\in\Phi$ by
$\{x_j\}:=H_{1,1}\cap\cap^d_{i=2}H_{i,j}$. It can then be shown
as in the proof of Proposition \ref{palg} that these
points are in general hyperplane position.
Therefore we obtain from \eqref{edist}
and the definition of $A_1(t)$ that $\|x_1\|\le b'(R+t)$
and in fact $\|x_j\|\le b'(R+t)$ for each $j\in[2d-1]$.
Therefore $H_{i,j}\in \xi_n$, provided that $T_n\ge b'(R+t)-R$. 
We have already seen that $R(P_j)\le b(R+t)$, so that the boundary of
$P_j$ is contained in $K_{T_n}$ if $T_n\ge b(R+t)-R$. (Note that $K_{T_n}$
is a spherical shell with outer radius $R+T_n$ centred at the origin.)
Altogether we obtain that $T\le b''(R+t)-R$ and hence
$R(Z)=R(K_T)\le b''(R+t)$, proving \eqref{e3.44}.

Having established \eqref{e3.44} we next note that
\begin{align*}
\BP(R(Z)> b''(R+t))&\le \BP\bigg(\bigcup^{2d}_{i=1}\{|\eta\cap A_i(t)|\le 2d-2\}\bigg)\\
&\le \sum^{2d}_{i=1}\BP(|\eta\cap A_i(t)|\le 2d-2)\\
&=\sum^{2d}_{i=1}\exp[-\lambda(A_i(t))]\sum^{2d-2}_{j=0}\frac{\lambda(A_i(t))^j}{j!},
\end{align*}
where we have used the defining properties of a Poisson process
to obtain the final equality. By \eqref{elambda} we have that
\begin{align}
\lambda(A_i(t))=\gamma t\,\BQ(U_i).
\end{align}
Setting $a:=\min\{\BQ(U_i):i\in[2d]\}$ 
and using that $\BQ(U_i)\le 1$ for each $i\in[2d]$, we obtain that
\begin{align*}
\BP(R(Z)> b(R+t))\le 2de^{-\gamma a t}\sum^{2d-2}_{j=0}\frac{\gamma^j}{j!}t^j.
\end{align*}
This implies \eqref{eexp} for suitably chosen $c_1,c_2$.
\end{proof}

\begin{remark}\rm The stopping set $Z$ in Theorem \ref{tstrongrigid1}
depends measurably on $\Phi\cap K^c$ (is a measurable function of $\Phi\cap K^c$).
This follows from the definition of the algorithm, but
also from the following argument, which applies to general stopping sets
$Z$ with the property $Z\subset K^c$.
By standard properties of random
closed sets it suffices to check for each compact $F\subset \R^d$ that 
$\{Z\cap F=\emptyset\}\in \sigma(\eta\cap K^c)$. Since $Z\subset K^c$ we have that
$\{Z\cap F=\emptyset\}=\{Z\cap (F\cup K)=\emptyset\}$.
Since $F\cup K$ is compact, there is a decreasing sequence $(U_n)_{n\ge 1}$
of open sets with intersection $F\cup K$ and such that
\begin{align*}
\{Z\cap (F\cup K)=\emptyset\}=\bigcup^\infty_{n=1}\{Z\cap U_n=\emptyset\}
=\bigcup^\infty_{n=1}\{Z\subset U^c_n\}.
\end{align*}
Since $Z$ is a $\Phi$-stopping set we have that the above right-hand side
is contained in $\cup^\infty_{n=1}\sigma(\Phi_{U_n^c})\subset\sigma(\Phi_{K^c})$,
as asserted.
\end{remark}

Theorem \ref{tstrongrigid1} implies the announced strong
rigidity properties of the intersection processes.

\begin{theorem}\label{tstrongrigid2} Let $m\in\{1,\ldots,d\}$
and let $B\subset\R^d$ be a bounded Borel set.
Then there exists a $\Phi_m$-stopping set $Z$ with $Z\subset B^c$ and such
that $(\Phi_m)_B$ is almost surely determined by $(\Phi_m)_Z$. 
Moreover, there exist constants $c_1,c_2>0$ such
that \eqref{eexp} holds.
\end{theorem}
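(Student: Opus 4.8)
\section*{Proof proposal}

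The plan is to deduce Theorem \ref{tstrongrigid2} from Theorem \ref{tstrongrigid1} by reducing the general intersection measure $\Phi_m$ to the vertex process $\Phi=\Phi_d$. Two observations drive the reduction. First, since a tuple $(H_1,\dots,H_m)$ contributes to $(\Phi_m)_B$ in \eqref{ePhim} only if $B\cap H_1\cap\cdots\cap H_m\neq\emptyset$, and then necessarily each $H_i$ meets $B$, the restriction $(\Phi_m)_B$ is a measurable function of $\eta\cap[B]$. Choosing a closed ball $K$ with $\bar B\subset K$ we have $\eta\cap[B]\subseteq\eta\cap[K]$, so $(\Phi_m)_B$ is a function of $\eta\cap[K]$. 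Second, I claim that $\Phi=\Phi_d$ can be recovered from $\Phi_m$ by a local, translation-covariant operation; granting this, Theorem \ref{tstrongrigid1} (which reconstructs $\eta\cap[K]$ from $\Phi$ observed on a stopping set) closes the chain $(\Phi_m)_Z\to\Phi\to\eta\cap[K]\to(\Phi_m)_B$.

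For the recovery step I would first note that the support $S_m:=\supp\Phi_m$ is, almost surely, the locally finite union of the $(d-m)$-flats $H_{i_1}\cap\cdots\cap H_{i_m}$, and that $S_m\cap G$ is determined by $(\Phi_m)_G$ for open $G$. The key geometric fact is the characterization
\begin{align*}
x\in\Phi_d \iff \bigcap\{F: F\text{ a }(d-m)\text{-flat of }S_m\text{ with }x\in F\}=\{x\}.
\end{align*}
Indeed, if exactly $k$ hyperplanes of $\eta$ pass through $x$, the flats of $S_m$ through $x$ are the $\binom{k}{m}$ intersections of $m$ of them, and their common intersection is $H_1\cap\cdots\cap H_k$, a flat of dimension $d-k$; this is a single point precisely when $k=d$, that is when $x$ is a vertex. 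The genericity statements recalled in Section \ref{sPoissonhyper} (no $d+1$ hyperplanes meeting in a point, every nonempty $m$-fold intersection being $(d-m)$-dimensional) guarantee that this holds almost surely, simultaneously for all $x$. Since the flats through $x$, being affine, are determined by $S_m$ in an arbitrarily small neighbourhood of $x$, the map $\Phi_d=\rho(\Phi_m)$ is local and measurable; for $m=d$ it is simply the identity, so the argument treats all $m$ uniformly.

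To assemble the stopping set I would enlarge the ball, fixing $K'\supset K$ with $d(x,K)>1$ for all $x\in(K')^c$, and apply Theorem \ref{tstrongrigid1} to $K'$ to obtain a $\Phi$-stopping set $Z_0=K'_{T'}\subseteq(K')^c$ with $\eta\cap[K']=f(\Phi\cap Z_0)$ and the bound \eqref{eexp} for $R(Z_0)$. Because every $x\in Z_0$ satisfies $d(x,K)>1$, the collar $Z:=\{x:d(x,Z_0)\le 1/2\}$ still lies in $K^c\subseteq B^c$, and $R(Z)\le R(Z_0)+1/2$ inherits the exponential tail \eqref{eexp}. For each $x\in Z_0$ the ball of radius $1/2$ about $x$ is contained in $Z$, so $(\Phi_m)_Z$ determines $S_m$ in a neighbourhood of every point of $Z_0$ and hence, via $\rho$, determines $\Phi\cap Z_0$. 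Feeding this into $f$ recovers $\eta\cap[K']\supseteq\eta\cap[K]$ and therefore $(\Phi_m)_B$, giving $(\Phi_m)_B=g((\Phi_m)_Z)$ almost surely.

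The main obstacle I anticipate is the stopping-set property of $Z$, which must be verified in the general random-measure sense of Section \ref{srigid}: one needs $\{Z\subseteq F\}\in\sigma((\Phi_m)_F)$ for every closed $F$. The point is that the whole construction is spatially local: computing the running time $T'$ and the output of the Algorithm \ref{algr} procedure reads $\Phi$ only on $Z_0$, and reconstructing $\Phi$ on $Z_0$ through $\rho$ reads $\Phi_m$ only on the collar $Z$. Thus the information used never leaves $Z$, and on $\{Z\subseteq F\}$ it never leaves $F$; formalizing this requires recasting the consistency argument after Algorithm \ref{algr} and the criterion of \cite[Proposition A.1]{BaumstarkLast09} in terms of the measure $\Phi_m$ rather than the simple process $\Phi$, together with care that $\rho$ is expressed as a measurable map of $(\Phi_m)_F$. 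The remaining estimates are routine given Theorem \ref{tstrongrigid1}.
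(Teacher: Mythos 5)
Your proposal is correct and follows the same reduction chain as the paper, namely $(\Phi_m)_Z\to\Phi\to\eta\cap[K]\to(\Phi_m)_B$ via Theorem \ref{tstrongrigid1}, but it differs in two respects. The paper's proof is a three-line reduction: after the harmless step of replacing $B$ by a convex compact $K\supset B$, it takes the \emph{same} stopping set $Z$ as in Theorem \ref{tstrongrigid1} and simply asserts that ``$\Phi_F$ is for each closed $F$ a measurable function of $(\Phi_m)_F$'', whence $\sigma(\Phi_F)\subset\sigma((\Phi_m)_F)$ and the stopping-set property transfers. You instead (a) actually prove this recovery step, via the characterization that $x$ is a vertex iff the common intersection of all $(d-m)$-flats of $\supp\Phi_m$ through $x$ is $\{x\}$ (which is correct: the intersection of all $\binom{k}{m}$ $m$-fold intersections equals $H_1\cap\cdots\cap H_k$, of dimension $d-k$ a.s.), and (b) fatten the stopping set to a collar $Z=\{x:d(x,Z_0)\le 1/2\}$ so that the recovery of $\Phi\cap Z_0$ from $(\Phi_m)_Z$ is genuinely local and free of boundary effects (a point of $\Phi$ can lie exactly on the outer boundary of $K_T$, where a flat could a priori meet $K_T$ only in that point). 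This is a legitimate, arguably more careful, variant; the price is the extra stopping-set verification you flag as the main obstacle, but for the collar it is actually immediate and you need not recast the algorithmic consistency argument: for closed $F$ one has $\{Z\subset F\}=\{Z_0\subset F^{\ominus}\}$ with $F^{\ominus}:=\{x:B(x,1/2)\subset F\}$ closed, and since $Z_0$ is a $\Phi$-stopping set this event lies in $\sigma(\Phi_{F^{\ominus}})$, which by your recovery lemma (every point of $F^{\ominus}$ has a full ball inside $F$) is contained in $\sigma((\Phi_m)_F)$. With that one line supplied, your argument is complete; the paper's shorter proof buys brevity at the cost of leaving the measurable recovery of $\Phi_F$ from $(\Phi_m)_F$ unproved.
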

\begin{proof}
Choose a convex and compact set $K\subset\R^d$ with $B\subset K$.  
Clearly, if the assertion holds in the case $B=K$, then we
obtain it for all $B\subset K$. Hence we can assume that $B=K$.
Let $Z$ be as in Theorem \ref{tstrongrigid1}. Since $\Phi_F$
is for each closed $F\subset \R^d$ a measurable function 
of $(\Phi_m)_F$ it follows that $Z$ is a $\Phi_m$-stopping set.
Moreover, $(\Phi_m)_K$ is a (measurable) function of
$\eta\cap [K]$. Hence Theorem \ref{tstrongrigid1} implies the
assertions. 
\end{proof}

The rigidity property in Theorem \ref{tstrongrigid2}
is considerably stronger than the {\em strong rigidity} studied 
in \cite{GL18}. The random measure $(\Phi_m)_B$ is 
not only determined by $(\Phi_m)_{B^c}$, but already
by $(\Phi_m)_{Z}$ for an exponentially small stopping
set $Z\subset B^c$.

\section{Hyperfluctuating Cox processes and thinnings}\label{sCox}

The strong rigidity property of the random measures $\Phi_1,\ldots,\Phi_m$
can easily be destroyed by additional randomization. For example
we may consider, for $m\in\{1,\ldots,d\}$, a {\em Cox process}
$\Psi_m$ directed by $\Phi_m$ \cite[Chapter 13]{LastPenrose17}. 
This means that the conditional distribution
of $\Psi_m$ given $\Phi_m$ is that of a Poisson process
with intensity measure $\Phi_m$. For $m=d$ this point process
can be interpreted as a multiset (or a random measure). 
Each point of $\Phi_m$ gets (independently of the other points)
a random multiplicity having a Poisson distribution of mean 1.
Let $B\subset\R^d$ be a bounded Borel set. Then the well-known
conditional variance formula (together with the stationarity of $\Phi_m$)
implies that
\begin{align*}
\BV[\Psi_m(B)]=\gamma_mV_d(B)+\BV[\Phi_m(B)],
\end{align*}
where $\gamma_m$ is the intensity of $\Phi_m$; see 
\cite[Proposition 13.6]{LastPenrose17}.
By \eqref{ecovariance}, $\Psi_m(B)$ has the same variance asymptotics
as $\Phi_m(B)$. In particular, $\Psi_m$ is (for $d\ge 2$) hyperfluctuating.
However, $\Psi_m$ is not rigid. For example, given a Borel set
$B$ with positive volume, $\Psi_m(B)$ is not determined
by the restriction of $\Psi_m$ to the complement of $B$.

In the case of the intersection point process $\Phi$
there is an even simpler way of randomizing, namely
to form a {\em $p$-thinning} $\Phi_p$ of $\Phi$
for some $p\in(0,1)$. Formally, given $\Phi$, the points of $\Phi$
are taken independently of each other as points of $\Phi_p$ with probability $p$
\cite[Section 5.3]{LastPenrose17}. 
This point process is not rigid.
A simple calculation (using the conditional variance formula for instance)
shows that 
\begin{align*}
\BV[\Phi_p(B)]=p^2\BV[\Phi(B)]+p(1-p)\BE[\Phi(B)],
\end{align*}
so that $\Phi_p$ inherits the variance asymptotics from $\Phi$.
It also not hard to see that the pair correlation function
of $\Phi_p$ is the same as that of $\Phi$ and hence 
given by the slowly decaying function \eqref{epairc}.

\section{Concluding remarks}

We have shown that the intersection point process $\Phi$ associated with 
a stationary Poisson hyperplane process is rigid in a very strong sense.
This holds for any directional distribution which is not concentrated on 
a great subsphere.
(Our arguments suggest that this might be true for more general 
stationary and mixing hyperplane processes with absolute continuous 
factorial moment measures.) On the other hand, $\Phi$ is 
hyperfluctating.
Hence hyperuniformity is not necessary for rigidity as (weakly) 
conjectured in \cite{GL17b}.
However, we completely agree with the authors of \cite{GL17b} that the 
precise relationships between rigidity and hyperuniformity constitute an 
interesting intriguing problem.
We believe in the existence of generic point process assumptions that 
need to be added to rigidity to conclude hyperuniformity.
Preferably these assumptions should be as minimal as possible.

\bigskip
\noindent 
\textbf{Acknowledgements:}
This research was supported in part by the Princeton University 
Innovation Fund for New Ideas in the Natural Sciences.
The authors wish to thank Daniel Hug and
Salvatore Torquato for valuable discussions  of
some aspects of our paper.

\end{document}